\newcommand{\bC}{{\mathbf C}}
\newcommand{\bO} {\mathbf O}
\newcommand{\bZ}{{\mathbf Z}}
\newcommand{\bF}{{\mathbf F}}
\newcommand{\AAA}{\mathsf{A}}
\newcommand{\Aut}{{{\operatorname{Aut}}}}
\newcommand{\Irr}{{{\operatorname{Irr}}}}
\newcommand{\GL}{\operatorname{GL}}
\newcommand{\Sp}{\operatorname{Sp}}
\newcommand{\Ker}{\operatorname{Ker}}
\newtheorem{thm}{Theorem}[section]
\newtheorem{lem}[thm]{Lemma}
\newtheorem*{thmA}{Theorem A}
\newtheorem*{conA'}{Conjecture A'}
\newtheorem*{thmB}{Theorem B}
\theoremstyle{definition}
\numberwithin{equation}{section}
\begin{document}

\title[Kernels of minimal characters]{Kernels of minimal characters of solvable groups}

\author{Alexander Moret\'o}
\address{Departamento de Matem\'aticas, Universidad de Valencia, 46100
  Burjassot, Valencia, Spain}
\email{alexander.moreto@uv.es}

\thanks{Research supported by  Ministerio de Ciencia e Innovaci\'on (Grant PID2019-103854GB-I00 funded by MCIN/AEI/ 10.13039/501100011033).}

\keywords{character kernel, minimal character, solvable group}

\subjclass[2010]{Primary 20C15}

\date{\today}

\begin{abstract}
Let $G$ be a finite solvable group. We prove that if $\chi\in\Irr(G)$ has odd degree and $\chi(1)$ is the minimal degree of the non-linear irreducible characters of $G$, then $G/\Ker\chi$ is nilpotent-by-abelian. 
\end{abstract}

\maketitle


\section{Introduction}  

A classical theorem of Broline and Garrison implies that if an irreducible character $\chi$ of a finite group $G$ has maximal degree then $\Ker\chi$ is nilpotent (Corollary 12.20 of \cite{isa}). This result was extended by Isaacs, who considered characters of $n$th maximal degree in \cite{isa09}, and proved that  if $\chi\in\Irr(G)$ has $n$th maximal degree, then the Fitting height of the solvable radical of $\Ker\chi$ is at most $n$.

Our goal in this note is to consider irreducible characters at the other extreme. Of course, if $\chi\in\Irr(G)$ is linear, then $G'\leq\Ker\chi$ and $G/\Ker\chi$ is abelian. But can we restrict the structure of $G/\Ker\chi$ if $\chi(1)$ is ``small"? This is the content of our main result. We write $m(G)=\min\{\chi(1)\mid\chi\in\Irr(G), \chi(1)>1\}$. We say that $\chi\in\Irr(G)$ is a  {\bf minimal character} if $\chi(1)=m(G)$. 

\begin{thmA}
Let $G$ be a solvable finite group. Suppose that $m(G)$ is odd. If $\chi\in\Irr(G)$ is a minimal character, then $G/\Ker\chi$ is nilpotent-by-abelian.
\end{thmA}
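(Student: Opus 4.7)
The first step is the standard reduction to the faithful case: pass to $\bar{G}=G/\Ker\chi$. The character $\chi$ descends to a faithful $\bar\chi\in\Irr(\bar G)$, and since every $\mu\in\Irr(\bar G)$ inflates to a character of $G$ of the same degree, $m(\bar G)\geq m(G)$; equality holds because $\bar\chi$ has degree $m(G)$. So it suffices to prove that a solvable group $G$ with a \emph{faithful} irreducible character $\chi$ of odd minimal (among non-linear) degree is nilpotent-by-abelian, equivalently $G'\leq F(G)$, equivalently $G/F(G)$ is abelian. Faithfulness of $\chi$ forces $Z(G)$ to be cyclic.

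An immediate consequence of the odd-degree hypothesis is that $O_2(G)$ is abelian. Indeed, every irreducible constituent $\mu$ of the restriction $\chi_{O_2(G)}$ satisfies $\mu(1)\mid\chi(1)$ by Clifford's theorem and is a power of $2$ since $O_2(G)$ is a $2$-group; oddness of $\chi(1)$ forces $\mu(1)=1$. Hence $\chi_{O_2(G)}$ is a sum of linear characters, so $[O_2(G),O_2(G)]\leq\Ker\chi=1$. Together with Ito's theorem applied to the abelian normal subgroup $O_2(G)$, this also gives $\chi(1)\mid[G:O_2(G)]$.

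I would then argue by contradiction, assuming $G/F(G)$ is non-abelian. By the Gasch\"utz theorem for solvable $G$, $G/F(G)$ acts faithfully and completely reducibly on $F(G)/\Phi(F(G))=\bigoplus_p V_p$ with $V_p=O_p(G)/\Phi(O_p(G))$, so the non-abelianness is detected on some $V_p$. The plan is then to combine Clifford theory applied to $\chi_{O_p(G)}$ --- writing $\chi_{O_p(G)}=e\sum_i\varphi_i$ with the $\varphi_i$ forming a $G$-orbit in $\Irr(O_p(G))$, so that $\chi(1)=e\cdot[G:I_G(\varphi_1)]\cdot\varphi_1(1)$ --- with Ito applied to suitable abelian normal subgroups (such as $O_2(G)$, $Z(F(G))$, or intersections of inertia groups) to produce an irreducible character of $G$ of degree strictly less than $\chi(1)$, contradicting minimality.

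The hardest step will be producing that smaller character in full generality. Oddness of $\chi(1)$ must be used crucially here to prevent the small character from being of even degree (which would not immediately contradict minimality without the parity hypothesis). I anticipate a case split according to whether the prime $p$ detecting the non-abelianness is $2$ or odd: the case $p=2$ should be accessible from the established abelianness of $O_2(G)$, while the odd case will need a more delicate analysis of the action of $G/F(G)$ on $V_p$ and of how irreducible characters of $O_p(G)$ extend to their inertia groups in $G$.
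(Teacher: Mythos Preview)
Your reduction to the faithful case is correct and is exactly how the paper begins: inflate/deflate shows $m(G/\Ker\chi)=m(G)$, so one may assume $\chi$ faithful.  Your observation that $O_2(G)$ is abelian is also valid.  Beyond that, however, there is a genuine gap: the contemplated contradiction from ``$G/F(G)$ non-abelian'' via Gasch\"utz, Clifford theory on $\chi_{O_p(G)}$, and Ito's theorem is never made concrete, and it is not at all clear how those ingredients alone would manufacture an irreducible character of $G$ of degree strictly below $m(G)$.  You yourself flag this as ``the hardest step'' and leave it open; as it stands the plan does not point toward a mechanism that produces such a character.

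The paper proceeds along an entirely different route, and the tools it invokes are absent from your plan.  First it reduces to a \emph{primitive} character: writing $\chi=\beta^G$ with $\beta\in\Irr(H)$ primitive, minimality of $\chi(1)$ forces $(1_H)^G$ to be a sum of linear characters, hence $G'\leq H$, so $H\trianglelefteq G$; a second application gives $G'=H'$ and $\beta(1)=m(H)$.  Thus it suffices to treat $H/\Ker\beta$, a solvable group with a \emph{primitive} faithful minimal character.  In that situation two structural inputs take over: a tensor-factorisation lemma (\`a la Robinson, via a central extension) shows that $\beta$ restricts irreducibly to every non-central normal subgroup, and Gajendragadkar's $p$-special factorisation forces $\beta(1)=p^n$ for a single prime $p$.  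Consequently $O_q\leq Z$ for all $q\neq p$ and $F=E\cdot Z$ with $E=O_p$ extraspecial of exponent $p$ and order $p^{2n+1}$.  The decisive step is then external: by Winter's description of $\Aut(E)$ (here $p$ odd is essential) $G/F$ embeds in $\Sp(2n,p)$, and the Landazuri--Seitz bound provides a faithful representation of $\Sp(2n,p)$ of dimension $(p^n-1)/2<p^n=m$, so $G/F$ has a faithful character that is a sum of linear characters and is therefore abelian.  Finally, intersecting the Fitting subgroups of the $H/\Ker\beta_i$ over the $G$-conjugates $\beta_i$ of $\beta$ gives $G'\leq F(H)$.

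Note in particular that oddness is used not through the abelianness of $O_2(G)$ but at the automorphism/representation step for the extraspecial $p$-group: for $p=2$ Winter's theorem gives an orthogonal rather than symplectic quotient and the Landazuri--Seitz bound no longer closes the argument (consistent with the even-degree counterexamples).  Your plan neither isolates the primitive case nor reaches the extraspecial structure, and without the Landazuri--Seitz input there is no visible source for the ``smaller character'' you hope to find.
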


As $\GL_2(3)$ shows, some hypothesis on $m(G)$ is definitely necessary. The Frobenius group of order $20$ acting  faithfully on an extraspecial $2$-group of order $2^5$ is an example with faithful minimal characters of degree $4$. We do not know whether it is enough to assume that $m(G)$ is not a power of $2$. 
On the other hand, some solvability hypothesis is definitely necessary: consider any non-abelian simple group with odd degree minimal characters (for instance, $\AAA_5$). 
Theorem A follows from applying the next result to $G/\Ker\chi$.

\begin{thmB}
Let $G$ be a finite  solvable group. Suppose that  $\chi\in\Irr(G)$ is a faithful minimal character. If $\chi(1)$ is odd, then $G$ is nilpotent-by-abelian.
\end{thmB}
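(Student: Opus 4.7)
I argue by induction on $|G|$. Let $G$ be a minimum counterexample: a finite solvable group admitting a faithful minimal character $\chi$ of odd degree which fails to be nilpotent-by-abelian. The main tool is Clifford theory applied to a minimal normal subgroup $V$ of $G$. Since $G$ is solvable, $V$ is an elementary abelian $p$-group, and since $\Ker\chi=1$, we have $V\not\le\Ker\chi$, so Clifford's theorem gives $\chi|_V = e(\theta_1+\cdots+\theta_t)$ with the $\theta_i$ forming the $G$-orbit of some linear $\theta\in\Irr(V)$. Setting $T=I_G(\theta)$, we obtain $\chi=\psi^G$ for some $\psi\in\Irr(T\mid\theta)$, and $\chi(1)=et$ with $e,t$ both odd. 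The argument splits into three cases: (i) $t=1$; (ii) $1<t<\chi(1)$; (iii) $t=\chi(1)$ and $e=1$.

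In case (i), $\theta$ is $G$-invariant and faithful on $V$, so $V$ is cyclic of prime order and $V\leq\zent G$. In case (ii), the permutation character $1_T^G$ has degree $t<m(G)$, so every one of its irreducible constituents is linear; this forces $G'\leq\operatorname{core}_G(T)\leq T$, hence $T$ is normal in $G$ with $G/T$ abelian. A Frobenius-reciprocity argument then shows $\psi$ is a minimal character of $T$: if $\psi'\in\Irr(T)$ were nonlinear of degree $<e$, then $(\psi')^G$ would have degree $t\psi'(1)<m(G)$ and hence only linear constituents, whence reciprocity would force $\psi'$ itself to be a sum of linear characters, contradicting its irreducibility. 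Since $\bar\psi\in\Irr(T/\Ker\psi)$ is faithful, minimal, and of odd degree $e$, induction gives $T/\Ker\psi$ nilpotent-by-abelian. Using the triviality of the $G$-core of $\Ker\psi$ inside the normal subgroup $T$, one sees that $T$ embeds into a finite direct product of conjugate copies of $T/\Ker\psi$, each nilpotent-by-abelian, so $T$ itself is nilpotent-by-abelian. Case (iii), where $\chi$ is monomial and $[G:T]=m(G)$ is odd so $T$ contains a Sylow $2$-subgroup of $G$, is handled by exploiting cyclicity of $T/\Ker\psi$ and the core-freeness of $\Ker\psi$ together with the faithful action of $G$ on the cosets of $T$.

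The principal obstacle has two parts. First, in case (i) the quotient $G/V$ loses the faithful character, so direct induction fails; I would resolve this by arguing either that $G$ admits a minimal normal subgroup outside $\zent G$ (reducing to case (ii) or (iii)) or, if every minimal normal subgroup lies in the cyclic group $\zent G$, that the socle of $G$ is itself cyclic of prime order, whereupon the standard structure theorem for $F(G)$ given a faithful irreducible representation (central product of extraspecial $p$-groups over $\zent G$), combined with the oddness of $\chi(1)$ ruling out extraspecial $2$-parts, tightly constrains $F(G)$ and forces $G/F(G)$ to embed abelianly into the relevant symplectic groups acting on $F(G)/\zent{F(G)}$. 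Second, in case (ii) the passage from \emph{$T$ nilpotent-by-abelian with $G/T$ abelian} to \emph{$G$ nilpotent-by-abelian} is nontrivial: the naive implication is false in general (witness $G=S_4$, $T=A_4$), so one must genuinely use the oddness of $\chi(1)$ — which rules out such examples, since $m(S_4)=2$ is even — to force the action of $G/T$ on $F(T)/\zent{F(T)}$ to be abelian. This final propagation step is where I expect most of the technical work to lie.
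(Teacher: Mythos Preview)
Your plan is on the right track, but the obstacle you flag as ``the final propagation step'' has a short resolution you are missing, and the paper organizes the reduction differently in a way that avoids some of your case analysis.

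\textbf{The missing step.} In your case (ii), once $T\trianglelefteq G$ with $G/T$ abelian, you can prove $G'=T'$, not merely $G'\le T$. Indeed $T'\trianglelefteq G$, and $T/T'$ is an abelian normal subgroup of $G/T'$ with abelian quotient $G/T$; by It\^o's theorem every irreducible character of $G/T'$ has degree dividing $|G:T|=t<m(G)$ and is therefore linear, so $G'\le T'$. Now $T$ nilpotent-by-abelian gives $T'\le \mathbf{F}(T)$, hence $G'=T'\le \mathbf{F}(T)\le \mathbf{F}(G)$. No further use of oddness is needed here; your $S_4/A_4$ worry dissolves because there $t=2=m(S_4)$, so the strict inequality $t<m(G)$ fails. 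Your case (iii) is also easier than you indicate: $1_T^G$ contains $1_G$ and has degree $m(G)$, so its remaining constituents have total degree $m(G)-1$ and are all linear; thus $G'\le T$, whence $\chi_T$ is a sum of linear characters and $T'\le\Ker\chi=1$, giving $G$ metabelian outright.

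\textbf{Comparison with the paper.} Rather than inducting via a minimal normal subgroup, the paper passes in one step to $H\le G$ and a \emph{primitive} $\beta\in\Irr(H)$ with $\beta^G=\chi$. The same $1_H^G$ argument gives $H\trianglelefteq G$ and $G'=H'$, and the same comparison with $m(G)$ shows $\beta$ is minimal in $H$. The primitive case (your case (i)) is isolated as a lemma: Gajendragadkar's $p$-special factorization forces a primitive minimal character to have prime-power degree $p^n$; for $p$ odd one gets $\mathbf{F}(G)=E\,\mathbf{Z}(G)$ with $E$ extraspecial of order $p^{2n+1}$, and $G/\mathbf{F}(G)\hookrightarrow\Sp(2n,p)$. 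The decisive input you do not name is the Landazuri--Seitz bound: $\Sp(2n,p)$ has a faithful representation of degree $(p^n-1)/2<p^n=m(G)$, forcing $G/\mathbf{F}(G)$ to be abelian. Finally, the paper glues the $G$-conjugates $H/\Ker\beta_i$ using $\bigcap_i \mathbf{F}(H/K_i)=\mathbf{F}(H)$ (Manz--Wolf), which is the precise version of your ``embeds into a product of conjugate copies'' step and delivers $G'=H'\le\mathbf{F}(H)$ directly.
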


Note that the structure of groups with {\it all} minimal characters faithful was described in detail by Robinson in \cite{rob1, rob2}. In particular, as shown in Lemma 2.1 of \cite{rob1}, solvable groups with all minimal characters faithful are nilpotent-by-abelian.  The examples mentioned above show that this is not the case if we just assume that $G$ has a minimal faithful character. Our proof of Theorem B relies on some of the ideas developed by Robinson.

\section{Proofs}

We argue as in Lemma 2 of \cite{rob2} to prove our first lemma.

\begin{lem}
\label{rest}
Let $G$ be a finite group. Suppose that $\chi\in\Irr(G)$ is a primitive faithful minimal character of $G$. If $N\trianglelefteq G$ is non-central, then $\chi_N\in\Irr(N)$.
\end{lem}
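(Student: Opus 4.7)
The plan is to use primitivity via Clifford's theorem to force a single $G$-invariant constituent on restriction, faithfulness to make that constituent faithful on $N$, and minimality of $\chi(1)$ together with Gallagher's theorem to rule out any multiplicity larger than one.

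First, apply Clifford's theorem to write $\chi_N = e(\theta_1 + \cdots + \theta_t)$, where $\{\theta_1, \ldots, \theta_t\}$ is the full $G$-orbit of some $\theta = \theta_1 \in \Irr(N)$. Primitivity of $\chi$ rules out $t \geq 2$: with $I = I_G(\theta) < G$ in that case, the Clifford correspondence would exhibit $\chi$ as $\psi^G$ for some $\psi \in \Irr(I)$ lying over $\theta$, contradicting that $\chi$ is not induced from any proper subgroup. Hence $t = 1$ and $\chi_N = e\theta$ with $\theta$ a $G$-invariant irreducible character of $N$. Faithfulness of $\chi$ forces $\Ker \theta = N \cap \Ker \chi = 1$, so $\theta$ is faithful on $N$.

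Next, I would rule out $\theta$ being linear. If $\theta(1) = 1$, then $G$-invariance gives, for every $g \in G$ and $n \in N$, $\theta(gng^{-1}) = \theta(n)$, so $[g, n^{-1}] \in \Ker \theta = 1$; hence $N \leq \zent{G}$, contradicting the hypothesis that $N$ is non-central. Therefore $\theta(1) \geq 2$.

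The core step is to show $e = 1$. Suppose for contradiction $e \geq 2$, and first consider the case where $\theta$ extends to some $\widehat\theta \in \Irr(G)$. Then by Gallagher, $\widehat\theta$ is a non-linear irreducible character of $G$ of degree $\theta(1) < e \theta(1) = \chi(1) = m(G)$, directly contradicting minimality. The main obstacle I foresee is the remaining case, in which $\theta$ does not extend to $G$. Following the argument of \cite[Lemma 2]{rob2}, I would pass via a character-triple isomorphism $(G, N, \theta) \sim (G^*, N^*, \theta^*)$ to a setting in which $\theta^*$ is a linear character on a central normal subgroup $N^* \leq \zent{G^*}$, and then exploit faithfulness of $\chi$ together with the projective-representation structure of $G/N$ associated to the non-trivial obstruction cocycle to manufacture an irreducible character of $G$ of degree strictly between $1$ and $\chi(1) = m(G)$. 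This last step — handling the non-extension case by controlled use of the cohomology class of $\theta$ — is the decisive technical hurdle, and it is precisely the place where Robinson's technique is invoked.
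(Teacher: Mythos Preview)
Your setup through Case~1 is correct: primitivity forces $\chi_N = e\theta$ with $\theta$ a faithful $G$-invariant constituent, non-centrality of $N$ forces $\theta(1)>1$, and if $\theta$ extends to $\widehat\theta\in\Irr(G)$ then $\widehat\theta$ is a non-linear character of degree $<\chi(1)$, contradicting minimality. The gap is precisely where you say it is: Case~2, where $\theta$ does not extend. Your description there is only a gesture---``manufacture an irreducible character of $G$ of degree strictly between $1$ and $\chi(1)$''---and it is not clear how the character-triple replacement alone produces such a character; indeed, it may well be that $\chi$ already has minimal degree among all characters lying over $\theta$, so nothing in $\Irr(G\mid\theta)$ helps.

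The paper's argument avoids the case split and supplies the missing mechanism. From $\chi_N\notin\Irr(N)$ one passes to a central extension $G^*$ of $G$ in which the inflation of $\chi$ factors as $\chi=\alpha\beta$ with $\alpha,\beta\in\Irr(G^*)$ both primitive and non-linear (this is the standard tensor-factorization result behind Robinson's Lemma~2). Choosing $\alpha$ with $\alpha(1)\le\chi(1)^{1/2}$, one looks not at $\alpha$ itself but at $\alpha\overline{\alpha}$: the central kernel cancels, so every irreducible constituent is a genuine character of $G$, and since $1_G$ occurs once, each remaining constituent has degree at most $\alpha(1)^2-1<\chi(1)=m(G)$, hence is linear. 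Thus $(G^*)'\le\Ker(\alpha\overline{\alpha})\le\bZ(\alpha)$, so $G^*/\Ker\alpha$ is nilpotent of class at most $2$. But $\alpha$ is non-linear and primitive, which is impossible for a nilpotent quotient. The point you are missing is this $\alpha\overline{\alpha}$ trick: one does not directly exhibit a small non-linear character of $G$, but instead shows that \emph{all} constituents of $\alpha\overline{\alpha}$ are linear and derives a structural contradiction with primitivity.
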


\begin{proof}
Suppose that $\chi_N\not\in\Irr(N)$. Then there exists a central extension $G^*$ of $G$ and $\alpha,\beta\in\Irr(G^*)$ such that $\chi=\alpha\beta$, where $\alpha,\beta$ are primitive non-linear irreducible characters of $G^*$. Without loss of generality, we may assume that $\alpha(1)\leq\chi(1)^{1/2}$. Since $1_{G^*}$ is an irreducible constituent of $\alpha\overline{\alpha}$, the minimality of $\chi(1)$ implies that  $\alpha\overline{\alpha}$ is a sum of linear characters. Hence $(G^*)'\leq\Ker(\alpha\overline{\alpha})$. In particular, $(G^*)'\leq\bZ(\alpha)$.  By Lemma 2.27 of \cite{isa}, $\bZ(\alpha)/\Ker\alpha\leq\bZ(G^*/\Ker\alpha)$. If follows that $G^*/\Ker\alpha$ is nilpotent (of class at most $2$). But $\alpha$ is non-linear and primitive. This contradicts Theorem 6.22 of \cite{isa}.
\end{proof}

Next, we handle the primitive case of Theorem B. We refer the reader to \cite{isa2} for the definition and basic properties of Gajendragadkar's $p$-special characters. 

\begin{lem}
\label{prim}
Let $G$ be a solvable group. Suppose that  $\chi\in\Irr(G)$ is a primitive faithful minimal  character. Then 
$\chi(1)$ is a  power of a prime $p$. Furthermore,
if $p>2$, then $G$ is nilpotent-by-abelian.
\end{lem}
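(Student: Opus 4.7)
My plan proceeds in two parts, matching the two conclusions.

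\textbf{Part 1 (prime-power degree).} I would invoke Gajendragadkar's theorem on $p$-special characters, which applies because $G$ is solvable: any primitive $\chi\in\Irr(G)$ factors, for each prime $p$, as $\chi=\alpha\beta$ with $\alpha,\beta\in\Irr(G)$, $\alpha$ being $p$-special and $\beta$ being $p'$-special. If $\chi(1)$ had two distinct prime divisors, then taking $p$ to be one of them makes both $\alpha$ and $\beta$ non-linear irreducible characters of $G$ of degree strictly less than $\chi(1)=m(G)$, contradicting minimality. Hence $\chi(1)=p^a$ for a unique prime $p$.

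\textbf{Part 2 (Fitting structure when $p$ is odd).} I may assume $G$ non-abelian, since otherwise there is nothing to prove. As $C_G(F(G))\leq F(G)$ in a solvable group, $F:=F(G)$ cannot be central, so Lemma \ref{rest} gives $\chi_F\in\Irr(F)$. Decomposing $F=\prod_q P_q$ into Sylow subgroups, $\chi_F=\prod_q\chi_{P_q}$ has $p$-power degree, forcing each $\chi_{P_q}$ with $q\neq p$ to be linear. Since $\chi$ is faithful, each such $P_q$ is abelian, and quasi-primitivity of $\chi$ places every abelian normal subgroup of $G$ in $\bZ(G)$; thus $P_q\leq\bZ(G)$ for all $q\neq p$, and $F=P\times Z$ with $P:=P_p$ and $Z\leq\bZ(G)$. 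The same quasi-primitivity, combined with cyclicity of $\bZ(G)$ (forced by $\chi$ primitive and faithful), makes every characteristic abelian subgroup of $P$ cyclic, so $P$ is of \emph{symplectic type}. For $p$ odd, Philip Hall's classification then gives $P=E\ast C$, a central product of an extraspecial $p$-group $E$ of exponent $p$, with $|E|=p^{2a+1}$, and a cyclic $p$-group $C$.

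\textbf{Part 3 (action on $V=P/\bZ(P)$).} The conjugation action of $G$ on $P$ fixes $\bZ(P)\leq\bZ(G)$ pointwise and preserves the commutator form, yielding $G\to\Sp(V)=\Sp_{2a}(p)$. Using $C_G(F)=C_G(P)\leq F$ together with centrality of $Z$, I would verify that the kernel of this map is exactly $F$, so that $G/F$ is realized (up to small scalar corrections) as a subgroup $H$ of $\Sp_{2a}(p)$. Any non-linear $\psi\in\Irr(G/F)$ inflates to $\Irr(G)$ and so satisfies $\psi(1)\geq m(G)=p^a$; hence every non-linear irreducible character of $H$ has degree at least $p^a$.

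\textbf{Main obstacle.} The crux is to prove that, for odd $p$, such an $H\leq\Sp_{2a}(p)$ must be abelian; this then gives $G'\leq F$, so $G$ is nilpotent-by-abelian. The Frobenius-of-order-$20$ example in the introduction shows this conclusion fails when $p=2$, so the argument must exploit oddness of $p$ in an essential way. I expect to follow the ideas of Robinson \cite{rob1,rob2}, ruling out non-abelian subgroups of $\Sp_{2a}(p)$ whose minimal non-linear character degree is at least $p^a$ via either the classification of maximal subgroups of symplectic groups or a direct inductive character-degree bound available only when $p$ is odd.
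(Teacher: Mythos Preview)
Your Parts 1--3 track the paper's argument almost exactly: the Gajendragadkar factorisation for the prime-power degree, Lemma~\ref{rest} to force $\bO_q(G)\leq\bZ(G)$ for $q\neq p$, cyclicity of $\bZ(G)$, the symplectic-type structure of $P=\bO_p(G)$ (the paper quotes Corollary~1.10 of Manz--Wolf to get $P$ extraspecial of exponent $p$ directly, but your central-product formulation is equivalent), and the embedding $G/\bF(G)\hookrightarrow\Sp_{2a}(p)$ via Winter's description of $\Aut_{\bZ(E)}(E)$ for odd $p$.

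Where you diverge is at the step you flag as the ``main obstacle''. The paper does not use Robinson's inductive methods or any classification of subgroups of $\Sp_{2a}(p)$. Instead it invokes the Landazuri--Seitz bound: for $p$ odd, $\Sp_{2a}(p)$ has a faithful ordinary representation of degree $(p^a-1)/2$ (the Weil representation). Restricting this to $H=G/\bF(G)$ gives a faithful character of $H$ of degree strictly less than $p^a=m(G)$. You have already observed that every non-linear irreducible of $H$ has degree $\geq m(G)$; hence this faithful character is a sum of linear characters, its kernel contains $H'$, and $H$ is abelian. That is the entire endgame---two lines rather than a subgroup analysis---and it is precisely here that oddness of $p$ is used a second time (the bound $(p^a-1)/2<p^a$ requires the Weil representation, unavailable in this form for $p=2$). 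So your plan is not wrong, but you are proposing heavy machinery for a step that closes immediately once you remember the small-degree representation of the symplectic group.
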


\begin{proof}
We may assume that $\chi(1)>1$.
By Theorem 2.17 of \cite{isa2}, $\chi$ factors as a product of $p$-special characters, where $p$ runs over the set of prime divisors of $\chi(1)$. Since $\chi(1)=m(G)$, it follows that $\chi(1)=p^n$ is a power of a prime $p$. This proves the first part of the lemma.

Suppose now that $p>2$. 
 Let $q\neq p$ be a prime. Then $\chi_{\bO_q(G)}$ is not irreducible. It follows from Lemma \ref{rest} that $\bO_q(G)$ is central in $G$. Hence $\bF(G)=E\bZ(G)$, where $E=\bO_p(G)$. Furthermore, using again Lemma \ref{rest}, every normal abelian subgroup of $G$ is central. Since $G$ has a faithful irreducible character, Theorem 2.32 of \cite{isa} implies that $\bZ(G)$ is cyclic. Now, Corollary 1.10 of \cite{mw} implies that $E$ is extraspecial of exponent $p$. Since $\chi_E\in\Irr(E)$, we necessarily have that $|E|=p^{2n+1}$.

Note that $\bC_G(E)=\bC_G(\bF(G))=\bZ(G)$, so $G/\bZ(G)$ is isomorphic to a subgroup of $\Aut_{\bZ(E)}(E)$. By \cite{win}, using again that $p>2$, we deduce that $G/\bF(G)$ is isomorphic to a subgroup of $\Sp(2n,p)$. By \cite{ls}, $\Sp(2n,p)$ has a faithful irreducible representation of dimension $(p^n-1)/2$. Hence $G/\bF(G)$ has a faithful character of degree $\leq (p^n-1)/2$. Since $m(G)=p^n$, we conclude that $G/\bF(G)$ has a faithful character  that is a sum of linear characters. We conclude that $G/\bF(G)$ is abelian, as we wanted to prove.
\end{proof}

Now, we complete the proof of a slightly strengthened version of Theorem B.

\begin{thm}
Let $G$ be a solvable group. Suppose that  $\chi\in\Irr(G)$ is a faithful minimal character. If $\chi$ is induced from an odd degree character, then $G$ is nilpotent-by-abelian.
\end{thm}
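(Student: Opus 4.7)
The plan is to reduce to the primitive case handled by Lemma~\ref{prim} by passing to a subgroup quotient, and then to transfer the resulting structural information back to $G$.

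First, using transitivity of induction, I replace $(H,\psi)$ with a pair $(H_0,\psi_0)$ in which $\psi_0 \in \Irr(H_0)$ is primitive and $\psi_0^H = \psi$. Since $\psi_0(1)$ divides $\psi(1)$, it remains odd, and $\chi = \psi_0^G$; after renaming, I assume $\psi$ itself is primitive in $H$. Put $K = \Ker\psi$ and $\bar H = H/K$. Viewed on $\bar H$, the character $\psi$ is faithful, primitive, and of odd degree, and I claim it is also minimal in $\bar H$. Indeed, if $\tau \in \Irr(\bar H)$ satisfied $1 < \tau(1) < \psi(1)$, then inflating $\tau$ through $K$ to a character of $H$ and inducing to $G$ would give $\tau^G$ of degree $[G:H]\tau(1) < [G:H]\psi(1) = \chi(1) = m(G)$; by Frobenius reciprocity, every irreducible constituent $\beta$ of $\tau^G$ satisfies $\langle \beta_H,\tau\rangle>0$, forcing $\beta$ non-linear because $\tau$ is, and contradicting the minimality of $\chi$.

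Lemma~\ref{prim} then applies to $\psi \in \Irr(\bar H)$, yielding that $\bar H$ is nilpotent-by-abelian, $\psi(1)=p^n$ for an odd prime $p$, $\bF(\bar H) = E\bZ(\bar H)$ with $E$ extraspecial of exponent $p$ and order $p^{2n+1}$, and $\bar H/\bF(\bar H) \hookrightarrow \Sp(2n,p)$.

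The remaining step is the main obstacle: transferring nilpotent-by-abelianness from the quotient $\bar H$ of the (possibly non-normal) subgroup $H$ up to $G$, since $\bF(G)$ cannot be read off from $\bF(\bar H)$ when $H$ is not normal in $G$. The natural approach is induction on $|G|$, with base case $H=G$ being Lemma~\ref{prim}. For the inductive step, I would pick a minimal normal subgroup $M \trianglelefteq G$ and analyze the Clifford decomposition of $\chi_M$: when $\chi_M$ is not homogeneous, $\chi = \hat\mu^I$ for a Clifford correspondent $\hat\mu \in \Irr(I)$ with $I<G$, and one combines the two induced expressions $\chi=\psi^G=\hat\mu^I$ to fit the situation back into the inductive hypothesis on a proper subgroup; when $\chi_M$ is homogeneous, Lemma 2.27 of \cite{isa} places $M \leq \bZ(G)$, and one exploits this central structure. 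The ultimate target is to embed $G/\bF(G)$ into $\Sp(2n,p)$ and re-run the Weil-representation degree estimate $(p^n-1)/2 < p^n = m(G)$ from \cite{ls} to force $G/\bF(G)$ abelian, just as in Lemma~\ref{prim}. Reconciling the two induced descriptions of $\chi$ compatibly, and controlling the contribution of the even part of $[G:H]$ to the Clifford data, is where the argument is likely most delicate.
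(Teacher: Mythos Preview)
Your reduction to a primitive $\psi\in\Irr(H)$ and the verification that $\psi$ is a minimal character of $H/\Ker\psi$ are correct and match the paper. The gap is in what you call ``the main obstacle'': you treat $H$ as possibly non-normal in $G$ and then sketch an unfinished inductive Clifford analysis, but in fact $H$ \emph{is} normal, and this is the key observation you are missing. The permutation character $(1_H)^G$ has degree $|G:H|\leq\chi(1)=m(G)$ and contains $1_G$ as a constituent; hence every other constituent has degree strictly less than $m(G)$ and must be linear. It follows that $G'\leq\Ker(1_H)^G\leq H$, so $H\trianglelefteq G$. When $\psi(1)>1$ one even gets $G'=H'$, since $H/H'$ is an abelian normal subgroup of $G/H'$ and It\^o's theorem forces all irreducible degrees of $G/H'$ to divide $|G:H|<m(G)$, hence to equal~$1$. (When $\psi(1)=1$, normality of $H$ plus Clifford already gives $H'\leq\Ker\chi=1$, so $G$ is metabelian.)

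With $H\trianglelefteq G$ in hand there is no need for induction on $|G|$ or for embedding $G/\bF(G)$ into $\Sp(2n,p)$. The paper instead takes the $G$-conjugates $\psi=\psi_1,\dots,\psi_t$, sets $K_i=\Ker\psi_i$, observes $\bigcap_i K_i=1$ because $\chi=\psi^G$ is faithful, and applies Lemma~\ref{prim} to each $H/K_i$ to obtain $G'=H'\leq F_i$ where $F_i/K_i=\bF(H/K_i)$; then $\bigcap_i F_i=\bF(H)$ by Proposition~9.5 of \cite{mw}, so $G'\leq\bF(H)$ and $G$ is nilpotent-by-abelian. Your proposed route through a minimal normal subgroup and Clifford correspondents is left incomplete: you do not explain how to reconcile the two induced expressions for $\chi$, and in the homogeneous case (``exploit this central structure'') you give no mechanism for producing the claimed embedding of $G/\bF(G)$ into $\Sp(2n,p)$, since the extraspecial group you found lives in $H/\Ker\psi$, not visibly in $G$.
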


\begin{proof}
Let $H\leq G$ and $\beta\in\Irr(H)$ primitive such that $\beta^G=\chi$. Suppose first that $\beta(1)=1$, so that $\chi(1)=|G:H|$. Since $1_G$ is an irreducible constituent of $(1_H)^G$ and $m(G)=|G:H|=(1_H)^G(1)$, we deduce that $(1_H)^G$ is a sum of linear characters. Hence 
$G'\leq\Ker(1_H)^G\leq H$. Thus $H\trianglelefteq G$ and by Clifford's theorem (Theorem 6.2 of \cite{isa}), $\chi_H$ is a sum of conjugates of $\beta$. In particular, $\chi_H$ is a sum of linear characters, so $H'\leq\Ker\chi=1$. Hence $G$ is metabelian and the result follows.

Now, we may assume that $\beta(1)>1$ is odd. First, we will see that $H\trianglelefteq G$ and $G'=H'$. Note that $\chi(1)=|G:H|\beta(1)>|G:H|$. Hence $(1_H)^G$ is a sum of linear characters and $G'\leq H$, as before. In particular, $H\trianglelefteq G$. Thus $H'$ is also normal in $G$ and all the irreducible characters of $G/H'$ have degree divisible by $|G:H|<\chi(1)=m(G)$. Hence, $\Irr(G/H')$ is a set of linear characters, and we conclude that $G'=H'$, as desired.

Now, we claim that $\beta(1)=m(H)$. Let $\mu\in\Irr(H)$ be non-linear. Hence, there exists $\nu\in\Irr(G)$ non-linear such that $[\mu^G,\nu]\neq 0$. Thus
$$
|G:H|\beta(1)=\chi(1)=m(G)\leq\nu(1)\leq\mu^G(1)=|G:H|\mu(1).
$$
We conclude that $\beta(1)\leq\mu(1)$. The claim follows.

Thus $\beta$ is a primitive faithful minimal character of $H/\Ker\beta$. By Lemma \ref{prim}, we have that $\beta(1)$ is a power of a prime $p$.  Let $\beta=\beta_1,\dots,\beta_t$ be the $G$-conjugates of $\beta$. Let $K_i=\Ker\beta_i$. Since $\chi=\beta^G$ is faithful, Lemma 5.11 of \cite{isa} implies that $\bigcap_{i=1}^tK_i=1$. 
Since $p>2$, by the second part of Lemma \ref{prim}, we have that $H/K_i$ is nilpotent-by-abelian. Write $F_i/K_i=\bF(H/K_i)$, so that $G'=H'\leq F_i$ for every $i$. By Proposition 9.5 of \cite{mw}, $\bigcap_{i=1}^tF_i=\bF(H)$. Therefore, $G'\leq \bF(H)$ and we conclude that $G$ is nilpotent-by-abelian, as wanted.
\end{proof}


\end{document}